\def\vint_#1{\mathchoice%
          {\mathop{\kern 0.2em\vrule width 0.6em height 0.69678ex depth -0.58065ex
                  \kern -0.8em \intop}\nolimits_{\kern -0.4em#1}}%
          {\mathop{\kern 0.1em\vrule width 0.5em height 0.69678ex depth -0.60387ex
                  \kern -0.6em \intop}\nolimits_{#1}}%
          {\mathop{\kern 0.1em\vrule width 0.5em height 0.69678ex
              depth -0.60387ex
                  \kern -0.6em \intop}\nolimits_{#1}}%
          {\mathop{\kern 0.1em\vrule width 0.5em height 0.69678ex depth -0.60387ex
                  \kern -0.6em \intop}\nolimits_{#1}}}
\def\vintslides_#1{\mathchoice%
          {\mathop{\kern 0.1em\vrule width 0.5em height 0.697ex depth -0.581ex
                  \kern -0.6em \intop}\nolimits_{\kern -0.4em#1}}%
          {\mathop{\kern 0.1em\vrule width 0.3em height 0.697ex depth -0.604ex
                  \kern -0.4em \intop}\nolimits_{#1}}%
          {\mathop{\kern 0.1em\vrule width 0.3em height 0.697ex depth -0.604ex
                  \kern -0.4em \intop}\nolimits_{#1}}%
          {\mathop{\kern 0.1em\vrule width 0.3em height 0.697ex depth -0.604ex
                  \kern -0.4em \intop}\nolimits_{#1}}}
\DeclareMathOperator*{\esssup}{ess\, sup}
\newcommand{\dkakakakakaka}{\mathrm{d}}
\newcommand{\restr}[2]{\ifx#1{\operatorname{restr}}\else {\operatorname{restr}_{#1}^{#2}}\fi}
\renewcommand{\P}{\mathcal{P}}
\newcommand{\A}{\mathcal{A}}
\newcommand{\pr}{\mathrm{P}}
\newcommand{\m}{\mathfrak{m}}
\newcommand{\og}{\mathrm{OptGeo}}
\newcommand{\ent}[1]{\mathrm{Ent_\infty\left( #1 \right)}}
\newcommand{\R}{\mathbb{R}}
\newcommand{\N}{\mathbb{N}}
\newcommand{\geo}[1]{\mathrm{Geo(#1)}}
\newcommand{\abs}[1]{\lvert #1 \rvert}
\renewcommand{\d}{\dkakakakakaka}
\renewcommand{\abs}[1]{\lvert #1\rvert}
\newcommand{\spt}{\mathrm{spt}}
\newcommand{\id}{\mathrm{id}}
\renewcommand{\epsilon}{\varepsilon}
\newtheorem{theorem}{Theorem}[section]
\newtheorem{lemma}[theorem]{Lemma}
\newtheorem{definition}[theorem]{Definition}
\newtheorem{question}{Question}
\begin{document}
\title[Existence of optimal transport maps in very strict $CD(K,\infty)$ -spaces]{Existence of optimal transport maps \\in very strict $CD(K,\infty)$ -spaces}
\author{Timo Schultz}
\address{University of Jyvaskyla \\ Department of Mathematics and Statistics \\
         P.O. Box 35 (MaD) \\
         FI-40014 University of Jyvas\-kyla \\
         Finland}
\email{timo.m.schultz@student.jyu.fi}

\thanks{Author is partially supported by the Academy of Finland. 
}
\subjclass[2000]{Primary 53C23.}
\keywords{Ricci curvature, optimal mass transportation, metric measure spaces, existence of optimal maps, branching geodesics}
\date{\today}
\maketitle
\begin{abstract} We introduce a more restrictive version of the strict $CD(K,\infty)$ -condition, the so-called very strict $CD(K,\infty)$ -condition, and show the existence of optimal maps in very strict $CD(K,\infty)$ -spaces despite the possible lack of uniqueness of optimal plans.
\end{abstract}

\section{Introduction}
\todo{Gigli-Rajala-Sturm-viittaus johonkin väliin}
Consider a (complete and separable) metric space $(X,d)$. In the theory of optimal mass transportation probably the most studied problem is the one with the quadratic cost i.e. the minimisation problem
\begin{align}\label{Kantorovich}
\inf \int_{X\times X} d^2(x,y)\,\d\varpi(x,y),
\end{align}
where the infimum is taken over all transport plans $\varpi$ between a given starting measure $\mu_0$ and a final measure $\mu_1$, in other words over all Borel probability measures $\varpi$ on $X\times X$ with marginals $\mu_0$ and $\mu_1$. This problem is interpreted as an optimal mass transportation problem in the following way. The quantity $d^2(x,y)$ tells how much it costs to transport a unit mass from $x$ to $y$, and for a given transport plan $\varpi$, mass from $x$ is to be transported to $y$, if and only if $(x,y)$ belongs to the support of $\varpi$. The total cost of sending all the mass $\mu_0$ to $\mu_1$ according to the plan $\varpi$ is given by the integral \[\int_{X\times X}d^2(x,y)\,\d\varpi(x,y).\]

The above formulation is the so-called Kantorovich formulation of the optimal mass transportation problem, which is a relaxed version of the original Monge formulation of the problem, where instead of the infimum \eqref{Kantorovich} one considers the infimum
\begin{align} \label{Monge} \inf \int_X d^2(x,T(x))\,\d\mu_0(x),
\end{align}
where the infimum is taken over all Borel mappings $T\colon X\to X$ for which $T_\#\mu_0=\mu_1$ i.e. all the Borel maps sending the mass $\mu_0$ to $\mu_1$. One natural and interesting question is whether these two infima agree and when is the optimal plan in \eqref{Kantorovich} given by an optimal map in \eqref{Monge}. More precisely, when is the optimal plan $\varpi$ of the form $\varpi=(\id,T)_\#\mu_0$ for some Borel mapping $T\colon X\to X$. 

The existence of optimal map was proven in the Euclidean setting for absolutely continuous measures by Brenier \cite{Brenier}. Later this result was generalized to the Riemannian framework by McCann \cite{McCann}, and further to some cases of the sub-Riemannian setting by Ambrosio and Rigot \cite{Ambrosio-Rigot}, Agrachev and Lee \cite{Agrachev-Lee}, and by Figalli and Rifford \cite{Figalli-Rifford}. In metric space setting Bertrand proved the existence of optimal map in the so-called Alexandrov spaces \cite{Bertrand}. Under the assumption of non-branching of geodesics, the existence of optimal map was proven for metric measure spaces with Ricci curvature bounded from below i.e. for the spaces satisfying the so-called curvature dimension condition ($CD(K,N)$-condition for short) by Gigli \cite{Gigli} (see Section \ref{Ricci} for the definition of $CD(K,\infty)$-space), and with milder assumptions by Cavalletti and Huesmann \cite{Cavalletti-Huesmann}. 

The non-branching assumption plays a crucial role in both of those proofs leaving the existence of optimal maps in general $CD(K,\infty)$-spaces open. On the other hand, if one considers spaces satisfying only the so-called measure contraction property, which is weaker type of Ricci curvature lower bound condition, the existence of optimal maps may fail as what follows from the example of Ketterer and Rajala in \cite{Ketterer-Rajala}. 

In this paper we go towards understanding the question in general $CD(K,\infty)$-spaces by considering a possibly more restrictive version of $CD(K,\infty)$-property without the non-branching assumption, namely we require the entropy to be convex not only along one optimal geodesic plan but instead along all plans that we get by restricting and weighting a given particular plan (see \ref{vs} for the definition of very strict $CD(K,\infty)$ -space). We prove the existence of optimal maps in very strict $CD(K,\infty)$ -spaces between measures $\mu_0$ and $\mu_1$ that are absolutely continuous with respect to the reference measure $\m$. We actually prove the following stronger statement saying that there exists an optimal dynamical transport plan that is induced by a map. 

\begin{theorem}
\label{thm:optmap}
Let $(X,d, \m)$ be a very strict $CD(K,\infty)$ -space and let $\mu_0, \mu_1\in\P_2(X)$ be absolutely continuous with respect to $\m$. Then there exists a measure $\pi\in\og(\mu_0,\mu_1)$ that is induced by a map i.e. there exists a Borel mapping $T\colon X\to \geo{X}$ so that $\pi=T_\#\mu_0$.
\end{theorem}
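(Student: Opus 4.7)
Given the hypothesis, the definition of very strict $CD(K,\infty)$ -space produces an optimal dynamical plan $\pi\in\og(\mu_0,\mu_1)$ along which entropy convexity holds under arbitrary bounded reweightings. My plan is to show that this particular $\pi$ is induced by a Borel map. Disintegrate $\pi = \int_X \pi_x\,\d\mu_0(x)$ with respect to $(e_0)_\#\pi=\mu_0$, where $e_0\colon\geo{X}\to X$ is the evaluation at time zero. The desired conclusion is equivalent to $\pi_x$ being a Dirac mass for $\mu_0$-a.e.\ $x\in X$; once this is known, the Borel map $T$ is extracted by a standard measurable selection.

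I will argue by contradiction: suppose the set $A:=\{x\in X : \pi_x\text{ is not a Dirac}\}$ has positive $\mu_0$-measure. By a measurable selection on the disintegration, I construct a Borel function $h\colon\geo{X}\to\{0,1\}$ with $\pi_x(\{h=0\})\in(0,1)$ for every $x\in A$. Normalising the restrictions of $\pi$ to $\{h=0\}$ and $\{h=1\}$ (both intersected with $e_0^{-1}(A)$) produces two probability measures $\pi^1,\pi^2$ on $\geo{X}$, each of the form $\rho_i\,\pi$ with $\rho_i\in L^\infty(\pi)$. By construction they share a common time-zero marginal $\nu\ll\m$, and after a possible further refinement of $h$ (detecting the branching at the first time where it becomes visible) we may arrange that $\sigma_1:=(e_1)_\#\pi^1\neq(e_1)_\#\pi^2=:\sigma_2$.

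The core step is a rigidity lemma: any two bounded reweightings of $\pi$ sharing the same time-zero marginal must coincide. To establish it, consider the one-parameter family $\pi^\lambda := \lambda\pi^1+(1-\lambda)\pi^2$, $\lambda\in[0,1]$, which is again a bounded reweighting of $\pi$, so the very strict $CD(K,\infty)$ condition applies uniformly to $\pi^1$, $\pi^2$, and every $\pi^\lambda$. Writing $\mu^\lambda_t := (e_t)_\#\pi^\lambda$ and $\sigma_\lambda:=\lambda\sigma_1+(1-\lambda)\sigma_2$, the optimality of $\pi^\lambda$ as a reweighting of the optimal $\pi$ forces $\lambda\mapsto W_2^2(\nu,\sigma_\lambda)$ to be affine. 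The plan is to combine this affinity with the $K$-convexity of the entropy along every $\pi^\lambda$ and with the strict convexity of $\ent{\cdot}$---which produces $\ent{\sigma_\lambda}<\lambda\ent{\sigma_1}+(1-\lambda)\ent{\sigma_2}$ for $\lambda\in(0,1)$ as soon as $\sigma_1\neq\sigma_2$---to derive a contradictory sharp inequality on the intermediate entropies $\ent{\mu^\lambda_t}$.

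The main obstacle is exactly this sharpness step. A naive averaging of the CD inequalities for $\pi^1$ and $\pi^2$ yields only an upper bound on the averaged intermediate entropy and therefore does not, by itself, contradict the CD upper bound for $\pi^\lambda$; strict convexity of entropy must be transported from the endpoint back across the whole time interval. The leverage for this is precisely the stability of the $K$-convexity inequality under arbitrary bounded reweightings---the defining feature of the \emph{very strict} condition---which permits us to probe the fibred structure of $\pi$ at a finer scale than a single convexity inequality would allow. Once the rigidity lemma is in place, the construction above is contradicted, $\pi_x$ must be a Dirac for $\mu_0$-a.e.\ $x$, and the required Borel map $T\colon X\to\geo{X}$ with $\pi=T_\#\mu_0$ is assembled by standard measurable selection.
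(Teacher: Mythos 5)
Your high-level strategy --- disintegrate the distinguished plan $\pi$ over $e_0$, assume a positive-measure set of non-Dirac fibres, split $\pi$ into two bounded reweightings $\pi^1,\pi^2$ with a common marginal at one time and different marginals at another, and then play the $K$-convexity of the entropy along $\pi^1$, $\pi^2$ and their average against each other --- is exactly the strategy of the paper (inherited from Rajala--Sturm and Gigli). The problem is that the step you yourself flag as ``the main obstacle'' is the entire content of the proof, and your proposal does not contain a mechanism for it. Your ``rigidity lemma'' (two bounded reweightings of $\pi$ with the same time-zero marginal coincide) is essentially a restatement of the theorem, and the sketch of its proof does not close: strict convexity of $\mathrm{Ent}_\infty$ under \emph{linear} interpolation gives $\ent{\sigma_\lambda}<\lambda\ent{\sigma_1}+(1-\lambda)\ent{\sigma_2}$, but feeding this into the $CD$ inequality for $\pi^\lambda$ only produces a \emph{smaller upper bound} on the intermediate entropy $\ent{\mu^\lambda_t}$. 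An improved upper bound contradicts nothing; the $CD(K,\infty)$ condition supplies no lower bounds on intermediate entropies, so ``transporting strictness back across the time interval'' needs an actual quantitative input that is absent here.

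The paper supplies that input through several steps you would need to reproduce in some form. First, a localization lemma shows that if $\pi$ is not induced by a map then neither is its restriction to some short subinterval, which lets one assume $W_2^2\le \log 2/(6\abs{K}+1)$ so the $K$-term cannot swamp the gain. Second, one cannot settle for $\sigma_1\neq\sigma_2$: distinct geodesics with a common start point may recombine at time $1$, so your ``refinement of $h$'' does not in general make the time-one marginals differ; the paper instead produces an intermediate time $S$ at which the two marginals are \emph{mutually singular}, because mutual singularity is what yields the definite entropy gain $\ent{\tfrac12(\mu^1+\mu^2)}=\tfrac12\ent{\mu^1}+\tfrac12\ent{\mu^2}-\log 2$, whereas mere inequality of measures gives an arbitrarily small gain that cannot beat the error terms. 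Third, via Gigli's Lemma~3.2 one finds a time $T<S$ where the two (reweighted) plans have \emph{equal} marginals, and a pigeonhole over a partition of $[T,S]$ into $k$ pieces produces consecutive times $t_i,t_{i+1}$ with equality at $t_i$, mutual singularity at $t_{i+1}$, and mass at least $1/k$. Chaining the convexity inequalities for $\bar\pi^1$, $\bar\pi^2$ and their average across $[0,t_{i+1}]$ and $[t_i,1]$ then turns the $-\log 2$ gain into a term of order $k\,t_i(1-t_{i+1})\log 2$, which for large $k$ violates the Jensen lower bound $\ent{\bar\mu^1_{t_{i+1}}/M}\ge -\log(k\,\m(X))$. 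Without some version of this quantitative bookkeeping --- or a genuinely different replacement for it --- your argument does not reach a contradiction.
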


In \cite{Rajala2014} Rajala and Sturm (see also \cite{Gigli-Rajala-Sturm}) were able to remove the a priori non-branching assumption by considering a more restrictive version of Ricci curvature lower bounds, namely the strong $CD(K,\infty)$ -property. They introduced the definition of essential non-branching spaces and proved that strong $CD(K,\infty)$ -spaces are essentially non-branching, from which they deduced by using the idea of the proof of Gigli \cite{Gigli} that every optimal plan is given by a map. The result of Rajala and Sturm applies also in the measured Gromov Hausdorff -stable setting of metric measure spaces with Riemannian Ricci curvature bounded from below (the so-called $RCD(K,\infty)$ -spaces, see \cite{Ambrosio-Gigli-Savare, Ambrosio-Gigli-Mondino-Rajala}). While the $CD(K,\infty)$-condition is stable \cite{Sturm, Gigli-Mondino-Savare}, and the strong $CD(K,\infty)$ is not, it remains open whether very strict $CD(K,\infty)$ -, or strict $CD(K,\infty)$ -condition is stable.

\begin{question} Let $(X_i,\d_i,\m_i)_{i\in\N}$ be a sequence of (very) strict $CD(K\infty)$\,-spaces that converge to a metric measure space $(X,\d,\m)$ in suitable sense ( for example in pointed measured Gromov sense). Is $(X,\d,\m)$ a (very) strict $CD(K,\infty)$ -space?
\end{question}

Another related open question is the relation of $CD(K,\infty)$-, strict $CD(K,\infty)$\,-, and very strict $CD(K,\infty)$\,-conditions:

\begin{question}
Does the strict $CD(K,\infty)$ -condition imply very strict $CD(K,\infty)$\,-condition? Does $CD(K,\infty)$ imply (very) strict $CD(K,\infty)$?
\end{question}

With the new notion of essential non-branching introduced by Rajala and Sturm, Cavalletti and Mondino further proved the existence of optimal transport maps for essentially non-branching spaces with the measure contraction property \cite{Cavalletti-Mondino}. Continuing from the work of Cavalletti and Huesmann \cite{Cavalletti-Huesmann} and the work of Cavalletti and Mondino \cite{Cavalletti-Mondino}, Kell proved that in a metric space endowed with qualitatively non-degenerate measure, and therefore especially in spaces satisfying the measure contraction property, the condition of being essentially non-branching is equivalent with having the existence and uniqueness of the optimal transport maps \cite{Kell}.

In the previous results the existence of optimal map is shown by proving that every optimal plan is given by a map and hence also the uniqueness of the plan is guaranteed. In very strict $CD(K,\infty)$ -spaces optimal plans may fail to be unique -- which can be observed by looking for example at the space $\R^n$ equipped with supremum norm -- and therefore this strategy cannot work in our setting. Instead, we should consider one special plan that is given by the defintion of very strict $CD(K,\infty)$ -space. Notice that even though very strict $CD(K,\infty)$ -spaces may fail to be non-branching (even in the sense of essential non-branchingness), still this specific optimal plan does not see any branching geodesics. 

Our proof follows the ideas of Rajala and Sturm in \cite{Rajala2014} and of Gigli in \cite{Gigli}. Instead of proving the existence via the non-branchingness of the optimal plan, we do the proof directly, since it is not clear how to implement the idea of the mixing procedure of \cite{Rajala2014} in the very strict $CD(K,\infty)$ -setting. 

\section{Preliminaries} \label{sec: pre}
Throughout this paper $(X,d,\m)$ is assumed to be a complete and separable metric space endowed with a locally finite Borel measure $\m$. Since we are considering only transportations between absolutely continuous measures, all the Wasserstein geodesics of our concern live in the set of absolutely continuous measures due to the $K$-convexity of the entropy. Thus we may restrict to the case where $X=\spt\,\m$.

By a geodesic we mean a constant speed curve $\gamma\colon [0,1]\to X$ that is length minimizing i.e. $l(\gamma)=d(\gamma_0,\gamma_1)$, where $l$ denotes the length of $\gamma$. We denote by $\geo{X}$ the set of all geodesics of the space $X$ endowed with the supremum metric. 
\subsection{Optimal mass transportation and Wasserstein geodesics}
We denote by $\P(X)$ the space of all Borel probability measures on $X$, and by $\P_2(X)$ the set of all $\mu\in\P(X)$ with finite second moment i.e. those $\mu$ for which we have
\begin{align}\int d^2(x,x_0) \,\d\mu<\infty
\end{align}
for some -- and thus for all -- $x_0\in X$. 

We define the Wasserstein $2$-distance $W_2$ in $\P_2(X)$ as
\begin{align}W_2(\mu,\nu)\coloneqq \left(\inf_{\sigma\in \A(\mu,\nu)}\int_{X\times X}d^2(x,y)\,\d\sigma\right)^{\frac12},
\end{align}
where $\A(\mu,\nu)\coloneqq \left\{\sigma\in\P(X\times X): \pr^1_\#\sigma=\mu,\ \pr^2_\#\sigma=\nu\right\}$ is the set of all admissible plans. The square of the Wasserstein distance is nothing else but the total cost in the mass transportation problem with quadratic cost between the masses $\mu$ and $\nu$. We denote the set of admissible plans realising the above infimum as $\mathrm{Opt(\mu,\nu)}$.

Even though we do not assume the space $(X,d)$ to be geodesic, at the end of Section \ref{sec: pre} we point out that from the definition of very strict $CD(K,\infty)$ -space we actually get that the space $X$ is a length space -- keeping in mind that $X=\spt\,\m$. Since $X$ is a complete and separable metric space with length structure, we also have that the Wasserstein space $(\P_2(X),W_2)$ is a complete and separable length space (see \cite{Villani} and \cite{Lisini}). Furthermore, a curve $t\mapsto\mu_t$ in $\P_2(X)$ is a geodesic if and only if there exists $\pi\in\P(\geo{X})$ such that $(e_t)_\#\pi=\mu_t$ and $(e_0,e_1)_\#\pi\in\mathrm{Opt}(\mu_0,\mu_1)$. Here $e_t\colon \geo{X}\to X$ is the evaluation map $\gamma\mapsto \gamma_t\coloneqq \gamma(t)$. The set of all $\pi\in\P(\geo{X})$ for which $(e_0,e_1)_\#\pi\in\mathrm{Opt}(\mu_0,\mu_1)$ is denoted by $\og(\mu_0,\mu_1)$.

\subsection{Ricci curvature bounded from below} \label{Ricci}

The notion of  synthetic Ricci curvature lower bounds for metric measure spaces were first introduced by Sturm \cite{Sturm} and independently by Lott and Villani \cite{Lott-Villani}. The definitions are based on the connection of Ricci curvature and optimal mass transportation; namely, the convexity properties of suitable entropy functionals along Wasserstein geodesic. For the definition of Ricci curvature lower bounds let us first introduce the \todo{Shannon?} entropy functional $\mathrm{Ent_\infty} \colon \P(X)\to [-\infty,\infty]$ that is defined as 
\begin{align}
\ent{\mu}=\left\{\begin{array}{ll} \int \rho\log\rho \,\d\m &\textrm{, if $\mu\ll \m$ and $\left(\rho\log\rho\right)_+ \in \mathrm{L^1(\m)}$}, \\ \infty & \textrm{otherwise,} \end{array}\right.
\end{align}
where $\rho$ is the density of $\mu$ with respect to $\m$ i.e. $\mu=\rho\m$, and $(\rho\log \rho)_+=\max\{\rho\log\rho,0\}$.

A metric measure space $(X,d,\m)$ is said to have Ricci curvature bounded from below by $K\in\R$, if for every $\mu_0,\mu_1\in \P_2(X)$ absolutely continuous with respect to $\m$, there exists $\pi\in\og(\mu_0,\mu_1)$\todo{määrittele optgeo}\ such that the entropy $\mathrm{Ent_\infty}$ is $K$-convex along $\pi$, that is the inequality
\begin{align}\label{convexity}
\ent{\mu_t}\le (1-t)\ent{\mu_0}+t\ent{\mu_1}-\frac{K}{2}t(1-t)\mathrm{W_2^2(\mu_0,\mu_1)}
\end{align} 
holds for all $t\in[0,1]$, where $\mu_t\coloneqq (e_t)_\#\pi$. Such a space is called a $CD(K,\infty)$-space. If the $K$-convexity holds along every $f\pi$, where $f\colon \geo{X}\to \R$ is any non-negative Borel function for which $\int f\,\d\pi=1$, then the space is called a strict $CD(K,\infty)$ -space (see \cite{Ambrosio-Gigli-Savare}). In this paper a more restrictive version of strict $CD(K,\infty)$ -condition is used, namely the convexity of the entropy is not only required between points $0,$ $t$ and $1$, but also between any points $t_1<t_2<t_3$. To emphasise the difference, we call these spaces \emph{very strict} $CD(K,\infty)$ -spaces.\todo{miten tama oikeasti hoidetaan}

\begin{definition}\label{vs}A metric measure space $(X,d,\m)$ is called a very strict $CD(K,\infty)$ -space, if for every $\mu_0,\mu_1\in\P_2(X)$ absolutely continuous with respect to the reference measure $\m$ there exists $\pi\in\og(\mu_0,\mu_1)$ so that the entropy $\mathrm{Ent_\infty}$ is $K$-convex along $(\restr{t_0}{t_1})_\#(f\pi)$ for all $t_0,t_1\in [0,1]$, $t_0<t_1$, and for all non-negative Borel functions $f\colon\geo{X}\to \R$ with $\int f\,\d\pi=1.$ 
\end{definition}

Here $\restr{t_0}{t_1}\colon\geo{X}\to\geo{X}$ is the restriction map defined as \[\restr{t_0}{t_1}(\gamma)(t)\coloneqq \gamma(t(t_1-t_0)+t_0).\]
It is worth of noticing that due to the Radon-Nikodym theorem, in the above definition one could equivalently require the convexity to hold along $(\restr{t_0}{t_1})_\#\tilde\pi $ for all $\tilde\pi \in \P(X)$ that are absolutely continuos with respect to $\pi$.

As mentioned before, the very strict $CD(K,\infty)$ -condition implies that the space $X$ is a length space: let $x,y\in X$ and define for $\epsilon>0$ the measures $\mu_0\coloneqq \m\lvert_{B(x,\epsilon)}$ and $\m\lvert_{B(y,\epsilon)}$. Let $\pi\in\og(\mu_0,\mu_1)$ which exists by the definition. Now any $\gamma\in\spt\pi$ is a geodesic from $B(x,\epsilon)$ to $B(y,\epsilon)$ and thus the point $\gamma_{\frac{1}{2}}$ is an $\epsilon$-midpoint of $x$ and $y$. Thus by completeness we have that $X$ is a length space \cite{Burago-Burago-Ivanov}.

\section{The Main Theorem}
In the paper \cite{Rajala2014} Rajala and Sturm prove the existence of the optimal map by first proving that every optimal plan $\pi\in\og({X})$ is essentially non-branching and then as a corollary of this they prove that every such $\pi$ is actually given by a map. While the proof for the essential non-branching of the $\pi\in\og(X)$ given by the definition of very strict $CD(K,\infty)$ -space can be carried through with relatively small changes, the proof of the corollary is more problematic. 

In their proof of the existence of optimal map they first divide the original measure $\pi$ into two measures $\pi^1$ and $\pi^2$ that intersect at time $t$, and then construct a new measure $\pi^{mix}$ by mixing these measures $\pi^1$ and $\pi^2$ essentially in the way that at time $t$ you may change from a geodesic in the support of one of the measures to a geodesic in the support of the other. By doing this they end up with a new plan that is still optimal, but due to this mixing the plan is not essentially non-branching anymore. The problem in applying this strategy to our case is that the mixing procedure should be done in such a way that in the end the constructed measure $\pi^{mix}$ is absolutely continuous with respect to the original measure $\pi$, which is something that one should not expect from this kind of mixing. To overcome this obstacle, we prove directly the existence of a map by still using the idea from the proof of the essential non-branching in \cite{Rajala2014}.

In the proof of the existence of optimal map we will use the following two lemmas.
\begin{lemma}[cf. {\cite[Lemma~3.2]{Gigli}}]\label{Gigli}
Let $\pi\in\og(\mu_0,\mu_1)$ be the plan given by the definition of very strict $CD(K,\infty)$. Then for all $\tilde\pi\ll \pi$ with $\m(\{\tilde\rho_0>0\})<\infty$ and $\ent{\tilde\mu_0},\ent{\tilde\mu_1}\in\R$ it holds that 
\begin{align}\m(\{\tilde\rho_0>0\})\le \liminf_{t\to 0} \m(\{\tilde\rho_t>0\}),\end{align}
where $\tilde\rho_t$ is the density of $(e_t)_\#\tilde\pi$ with respect to $\m$.
\end{lemma}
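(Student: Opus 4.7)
The plan is to reduce to a situation in which Jensen's inequality $\ent{\mu}\geq -\log\m(\{\rho>0\})$, valid for any probability measure $\mu=\rho\m$, is tight at the initial time; this happens precisely when the initial density is constant on its support. Once this is arranged, the $K$-convexity of the entropy along the corresponding plan will give an upper bound on $\ent{\mu_t}$ converging to $-\log\m(\{\rho_0>0\})$ as $t\to 0^+$, and Jensen then transfers this back to the desired lower bound on $\m(\{\rho_t>0\})$.

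First, I would fix a basepoint $x_0\in X$ and choose an exhausting family $B_n\coloneqq\{1/n<\tilde\rho_0<n\}\cap B(x_0,n)$ of $\{\tilde\rho_0>0\}$, so that $\m(B_n)\nearrow\m(\{\tilde\rho_0>0\})$. Then I would define the probability measure
\begin{align}
\pi_n\coloneqq \frac{1}{\m(B_n)}\cdot\frac{1_{B_n}(\gamma_0)}{\tilde\rho_0(\gamma_0)}\,\tilde\pi,
\end{align}
which is absolutely continuous with respect to $\pi$ with density bounded by $n/\m(B_n)$ times that of $\tilde\pi$. By construction the marginal $\mu^n_0\coloneqq (e_0)_\#\pi_n$ has density $1_{B_n}/\m(B_n)$ with respect to $\m$, so $\ent{\mu^n_0}=-\log\m(B_n)$, while $\mu^n_1\coloneqq (e_1)_\#\pi_n$ satisfies the pointwise bound $\mu^n_1\leq (n/\m(B_n))\tilde\mu_1$, which together with $\ent{\tilde\mu_1}\in\R$ and $\tilde\mu_1\in\P_2(X)$ yields $\ent{\mu^n_1}\in\R$ and $\mu^n_1\in\P_2(X)$.

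Writing $\pi_n=f_n\pi$, the very strict $CD(K,\infty)$-condition applied to $f_n$ with $t_0=0$ and $t_1=1$ gives
\begin{align}
\ent{\mu^n_t}\leq (1-t)\ent{\mu^n_0}+t\ent{\mu^n_1}-\tfrac{K}{2}t(1-t)W_2^2(\mu^n_0,\mu^n_1)
\end{align}
for all $t\in[0,1]$. Combining this with the Jensen lower bound $\m(\{\rho^n_t>0\})\geq e^{-\ent{\mu^n_t}}$ and letting $t\to 0^+$ yields $\liminf_{t\to 0}\m(\{\rho^n_t>0\})\geq e^{-\ent{\mu^n_0}}=\m(B_n)$. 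Since $\pi_n\ll\tilde\pi$ forces $\{\rho^n_t>0\}\subseteq\{\tilde\rho_t>0\}$ modulo $\m$-null sets, the same bound transfers to $\tilde\rho_t$, and sending $n\to\infty$ gives the lemma.

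The main delicate point will be the finiteness of $\ent{\mu^n_1}$, without which the $K$-convexity inequality would be vacuous. I expect to verify it from the bound $\mu^n_1\leq (n/\m(B_n))\tilde\mu_1$ via the elementary pointwise estimate $s\log s\leq s\log C_n+s\log\tilde\rho_1$ whenever $0\leq s\leq C_n\tilde\rho_1$ with $C_n=n/\m(B_n)$, which makes $(\rho^n_1\log\rho^n_1)_+$ integrable against $\m$ using $\ent{\tilde\mu_1}\in\R$. The other crucial observation — that Jensen's bound is tight exactly when the density is constant on its support — is what dictates replacing $\tilde\mu_0$ by the uniform measure $\m\lvert_{B_n}/\m(B_n)$ in the construction of $\pi_n$.
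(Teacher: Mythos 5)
Your proof is correct and takes essentially the same route as the paper, which does not write out an argument but defers to Gigli's Lemma~3.2: that proof is exactly your uniformization of the initial marginal on an exhausting family $B_n$ (so that Jensen's inequality $\ent{\mu^n_0}=-\log\m(B_n)$ is an equality at $t=0$), followed by the $K$-convexity of the entropy along the reweighted plan $\pi_n$, which is available here precisely because $\pi_n\ll\tilde\pi\ll\pi$ and the very strict $CD(K,\infty)$-condition grants convexity along every such reweighting. Your handling of the only delicate point, the finiteness of $\ent{\mu^n_1}$ via the bound $\rho^n_1\le (n/\m(B_n))\tilde\rho_1$, is also the standard and correct one.
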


\begin{lemma}\label{simplelemma}
Let $(X,\d)$ be a separable metric space. Then for any $\sigma\in \P(X\times X)$ for which $\sigma\left(\left\{\right(x,x):x\in X\}\right)=0$ there exists $E\subset X$ so that $\sigma(E\times(X\setminus E))>1/5$.
\end{lemma}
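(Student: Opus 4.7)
The intuition is that a measure with no mass on the diagonal is essentially carried by pairs $(x,y)$ with $x \neq y$, which can be separated by a sufficiently fine partition of $X$; once the mass lives in off-diagonal blocks of a countable partition, a random bipartition should capture a constant fraction of it. The plan is to carry this out in two steps: first localize $\sigma$ away from the diagonal by a partition argument, then extract a good bipartition by averaging.

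For the first step, I would observe that the sets $U_\delta := \{(x,y)\in X\times X : d(x,y) < \delta\}$ decrease to the diagonal as $\delta \to 0$, so by continuity of the finite measure $\sigma$ from above, $\sigma(U_\delta) \to 0$. Using separability of $(X,d)$, one can construct a Borel partition $\{B_n\}_{n\in\N}$ of $X$ with $\operatorname{diam}(B_n) < \delta$ (for instance by enumerating a dense sequence and subtracting ever smaller balls). Then $B_n \times B_n \subset U_\delta$ for every $n$, so $\sum_n \sigma(B_n \times B_n) \leq \sigma(U_\delta) < \epsilon$, and hence $\sum_{n \neq m} \sigma(B_n \times B_m) > 1 - \epsilon$ for any prescribed $\epsilon > 0$ by choosing $\delta$ small enough.

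For the second step, I would use a probabilistic averaging argument: let $A \subseteq \N$ be a random subset in which each $n$ is included independently with probability $1/2$, and set $E_A := \bigcup_{n \in A} B_n$. For every pair $n \neq m$ the event $\{n \in A,\, m \notin A\}$ has probability $1/4$, so
\[
\mathbb{E}\bigl[\sigma(E_A \times (X \setminus E_A))\bigr] \;=\; \tfrac{1}{4}\sum_{n\neq m} \sigma(B_n\times B_m) \;>\; \tfrac{1-\epsilon}{4}.
\]
Choosing $\epsilon$ small enough that $(1-\epsilon)/4 > 1/5$ (any $\epsilon < 1/5$ works), some realization of $A$ must attain at least the expectation, and setting $E := E_A$ for this realization finishes the argument.

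No genuine obstacle is expected: the measure-theoretic bookkeeping (existence of the small-diameter Borel partition in a separable space, and the upper semicontinuity of $\sigma$ along the decreasing sequence $U_\delta$) is standard, and the probabilistic argument can be replaced by a direct deterministic choice if preferred. The constant $1/5$ in the statement is not optimal; the method in fact produces any value strictly less than $1/4$, which is the natural barrier of this two-sided averaging.
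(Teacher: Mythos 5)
Your proof is correct, and it is essentially the argument that the paper defers to \cite{Rajala2014} for: choose $\delta$ so that the $\delta$-neighbourhood of the diagonal has $\sigma$-measure less than $1/5$, take a countable Borel partition of $X$ into pieces of diameter less than $\delta$, and average over bipartitions of the pieces, which is exactly where the constant $1/5=\tfrac14\cdot\tfrac45$ comes from. All the steps (continuity from above along $U_\delta$, the disjointification of small balls into a Borel partition, and the expectation computation) check out, including the strictness of the final inequality.
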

The proof of Lemma \ref{Gigli} is the same as the proof of \cite[Lemma~3.2]{Gigli} and the proof of Lemma \ref{simplelemma} can be found in \cite{Rajala2014}.  We will also use the following simple lemma.

\begin{lemma}\label{simplerlemma} Let $\mu_0,\mu_1\in\P_2(X)$ be absolutely continuous with respect to $\m$ with densities $\rho_0$ and $\rho_1$ such that $\ent{\mu_0},\ent{\mu_1}\in\R$ and $\rho_0,\rho_1<C$, and let $\pi\in\og(\mu_0,\mu_1)$ be an optimal plan concentrated on a set of geodesics with length bounded by some constant $L$ such that the convexity inequality \eqref{convexity} holds for all restrictions of $\pi$. Then there exists a constant $M<\infty$ such that for all $t\in[0,1]$ we have $\rho_t\le M$ $\mu_t$-almost everywhere, where $\mu_t\coloneqq (e_t)_\#\pi$ with density $\rho_t$. 
\end{lemma}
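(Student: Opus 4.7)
The plan is to argue by contradiction through a single restriction of $\pi$: if $\rho_t$ could be arbitrarily large on a set of positive $\mu_t$-measure for some $t\in[0,1]$, then restricting $\pi$ to the geodesics passing through that set at time $t$ would force $\ent{\tilde\mu_t}$ to be large, which is incompatible with the $K$-convexity inequality \eqref{convexity} and the a priori bound $\rho_0,\rho_1<C$.

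Fix $t\in[0,1]$ and $M>1$, and assume $A\coloneqq\{\rho_t>M\}$ has $\mu_t(A)>0$. Set
\[\tilde\pi\coloneqq\frac{1}{\mu_t(A)}\,\pi\lvert_{e_t^{-1}(A)},\qquad\tilde\mu_s\coloneqq(e_s)_\#\tilde\pi.\]
Since $\tilde\pi\le\pi/\mu_t(A)$, one has $\tilde\rho_s\le\rho_s/\mu_t(A)$ for every $s$; at the endpoints this gives $\tilde\rho_i\le C/\mu_t(A)$ and hence $\ent{\tilde\mu_i}\le\log(C/\mu_t(A))$ for $i=0,1$. At the intermediate time, $\tilde\mu_t$ is concentrated on $A$ with density $\rho_t\mathbf{1}_A/\mu_t(A)\ge M/\mu_t(A)\ge 1$, so integrating the pointwise inequality $\tilde\rho_t\log\tilde\rho_t\ge\tilde\rho_t\log(M/\mu_t(A))$ yields
\[\ent{\tilde\mu_t}\ge\log(M/\mu_t(A)).\]
As $\tilde\pi$ is a restriction of $\pi$, I may invoke \eqref{convexity} for $s\mapsto\tilde\mu_s$ at $s=t$ and use $W_2(\tilde\mu_0,\tilde\mu_1)\le L$ to obtain
\[\ent{\tilde\mu_t}\le\log(C/\mu_t(A))+\frac{K^- L^2}{8},\]
with $K^-\coloneqq\max(-K,0)$, after bounding $-\tfrac{K}{2}t(1-t)L^2\le K^-L^2/8$ via $t(1-t)\le\tfrac14$. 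Combining the two estimates for $\ent{\tilde\mu_t}$ and cancelling $\log\mu_t(A)$ gives $\log M\le\log C+K^-L^2/8$. Hence the lemma holds with $M\coloneqq C\exp(K^-L^2/8)$, a constant independent of $t$.

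The main technical subtlety I anticipate is ensuring the finiteness of the endpoint entropies, required so that the convexity inequality is non-vacuous. The bound $\tilde\rho_i\le C/\mu_t(A)$ makes $(\tilde\rho_i\log\tilde\rho_i)_+$ integrable, so $\ent{\tilde\mu_i}\le\log(C/\mu_t(A))<+\infty$; were one of them equal to $-\infty$, the convexity inequality would force $\ent{\tilde\mu_t}=-\infty$, contradicting the positive lower bound derived above. Hence both endpoint entropies are automatically finite, and the computation sketched above goes through cleanly.
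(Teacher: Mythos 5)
Your proof is correct and takes essentially the same route as the paper's: both restrict $\pi$ to the geodesics passing through the superlevel set $\{\rho_t>M\}$ at the intermediate time, bound the endpoint entropies above by $\log(C/\mu_t(A))$ and the intermediate entropy below by $\log(M/\mu_t(A))$, and contradict the $K$-convexity inequality with the $L^2$ bound on the error term. The only cosmetic difference is that you extract the explicit constant $M=C\exp(K^-L^2/8)$ directly, whereas the paper phrases the argument as a contradiction by letting $M\to\infty$.
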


\begin{proof} We argue by contradiction. Assume that for all $M>0$ there exists $t_M\in(0,1)$ so that $N_M\coloneqq\mu_{t_M}(\{\rho_{t_M}>M\})>0$. Define \[\hat\pi\coloneqq \pi\lvert_{e_{t_M}^{-1}(\{\rho_{t_M}>M\})}\] and denote $\hat\mu_t\coloneqq (e_t)_\#\hat\pi$ and the density of $\hat\mu_t$ by $\hat\rho_t$. Then we have that
\begin{align}&\phantom{=}\int\frac{\hat\rho_{t_M}}{N_m}\log\frac{\hat\rho_{t_M}}{N_m}\,\d\m-(1-t_M)\int\frac{\hat\rho_{0}}{N_m}\log\frac{\hat\rho_{0}}{N_m}\,\d\m-t_M\int \frac{\hat\rho_{1}}{N_m}\log\frac{\hat\rho_{1}}{N_m}\,\d\m
\\ &\ge \log\frac{M}{N_M}-(1-t_M)\log\frac{C}{N_M}-t_M\log\frac{C}{N_M}=\log M-\log C\to \infty,
\end{align}
when \[M\to\infty.\]
On the other hand by the K-convexity we have for all $M$ that
\begin{align}
&\phantom{=}\int\frac{\hat\rho_{t_M}}{N_m}\log\frac{\hat\rho_{t_M}}{N_m}\,\d\m-(1-t_M)\int\frac{\hat\rho_{0}}{N_m}\log\frac{\hat\rho_{0}}{N_m}\,\d\m-t_M\int \frac{\hat\rho_{1}}{N_m}\log\frac{\hat\rho_{1}}{N_m}\,\d\m
\\ &\le -\frac{K}{2}t_M(1-t_M)\mathrm{W_2^2(\frac{\hat\mu_0}{N_M},\frac{\hat\mu_1}{N_M})}\le \frac{\abs{K}}{2}L^2<\infty.
\end{align}
which is a contradiction. Hence there exists $M$ so that $\rho_t\le M$ for all $t\in[0,1]$. 
\end{proof}
 
\begin{proof}[Proof of Theorem \ref{thm:optmap}]
Let $\mu_0,\mu_1\in \P_2(X)$ be measures that are absolutely continuous with respect to $\m$, and let $\rho_0$ and $\rho_1$ be densities of $\mu_0$ and $\mu_1$ with respect to $\m$. We will prove that the measure $\pi\in\og(\mu_0,\mu_1)$ given by the definition of very strict $CD(K,\infty)$ -space is induced by a map. We will argue by contradiction. Assume that $\pi$ is not induced by a map.  As in \cite{Rajala2014}, we may assume that $\rho_0,\rho_1<C<\infty$ and that the space $X$ is bounded. By $\sigma$-finiteness of $\m$ we may also assume that the $\m$-measure of supports of $\mu_0$ and $\mu_1$ are finite. In particular, by using Jensen's inequality we may also assume that $\ent{\mu_0},\ent{\mu_1}\in \R$.

As in the proof of the essential non-branching of strong $CD(K,\infty)$ -spaces in \cite{Rajala2014}, we want to make the square of the Wasserstein distance $\mathrm{W_2^2(\mu_0,\mu_1)}$ arbitrary small, so that we can basically consider only the convexity of the entropy and forget the $K$ dependent error term in \eqref{convexity}. This is done by the following lemma

\begin{lemma}\label{Localization} If $\pi\in\og(\mu_0,\mu_1)$ is not induced by a map, then for every $k\in\N$ there exists an interval $[i/k,(i+1)/k]$ so that $(\restr{i/k}{(i+1)/k})_\#\pi$ is not induced by a map i.e. $(\restr{i/k}{(i+1)/k})_\#\pi\neq T_\#((e_{i/k})_\#\pi)$.
\end{lemma}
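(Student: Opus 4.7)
The plan is to argue by contrapositive: fix $k\in\N$ and assume that $\pi^i:=(\restr{i/k}{(i+1)/k})_\#\pi$ is induced by a map for every $i\in\{0,\ldots,k-1\}$; I will then deduce that $\pi$ itself is induced by a map. Geometrically the idea is simple: the map-inducedness of $\pi^i$ forces $\pi^i$-typical geodesic pieces to be determined by their initial value at time $i/k$, and iterating this determination across the $k$ consecutive sub-intervals propagates uniqueness from time $0$ through to time $1$, pinning down the whole geodesic from its starting point.

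The technical backbone is the characterisation that a plan $\sigma\in\P(\geo X)$ is induced by a map (from $(e_0)_\#\sigma$) if and only if there exists a Borel set $\Gamma\subseteq\geo X$ with $\sigma(\Gamma)=1$ on which $e_0$ is injective. The forward direction is immediate (take $\Gamma$ to be the image of the inducing map restricted to a Borel set on which $e_0\circ T=\id$); the converse is a standard application of the Lusin--Souslin theorem, by which an injective Borel map between standard Borel spaces has Borel image and a Borel inverse, so that $e_0|_\Gamma$ admits a Borel section that realises $\sigma$ as a pushforward.

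Granted this characterisation, for each $i$ I pick a Borel $\Gamma_i\subseteq\geo X$ with $\pi^i(\Gamma_i)=1$ and $e_0|_{\Gamma_i}$ injective, and I set
\[
\tilde\Gamma \;:=\; \bigcap_{i=0}^{k-1} (\restr{i/k}{(i+1)/k})^{-1}(\Gamma_i).
\]
Each restriction operator is continuous on $\geo X$ (it is even $1$-Lipschitz in the supremum metric) and pushes $\pi$ to $\pi^i$, so $\tilde\Gamma$ is a Borel set of full $\pi$-measure. To check that $e_0|_{\tilde\Gamma}$ is injective, take $\gamma,\gamma'\in\tilde\Gamma$ with $\gamma(0)=\gamma'(0)$: then $\restr{0}{1/k}\gamma$ and $\restr{0}{1/k}\gamma'$ both lie in $\Gamma_0$ and agree at time $0$, hence coincide, so $\gamma\equiv\gamma'$ on $[0,1/k]$ and in particular $\gamma(1/k)=\gamma'(1/k)$. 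Iterating this argument inside $\Gamma_1,\Gamma_2,\ldots,\Gamma_{k-1}$ forces agreement on each successive sub-interval, and after $k$ steps $\gamma=\gamma'$. A final invocation of the characterisation then produces a Borel $T\colon X\to\geo X$ with $\pi=T_\#\mu_0$, contradicting the hypothesis.

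The main obstacle, as I see it, is bookkeeping rather than geometry: producing the Borel $\Gamma_i$'s from the map-induced hypothesis and extracting a Borel $T$ at the end both reduce to careful applications of Lusin--Souslin in the standard Borel space $\geo X$. It is worth noting that the argument is purely measure-theoretic; no convexity of the entropy or Ricci bound is used, so the lemma is really a soft statement about arbitrary elements of $\og(\mu_0,\mu_1)$ and has nothing specific to do with the very strict $CD(K,\infty)$ assumption.
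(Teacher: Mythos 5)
Your proof is correct, but it takes a genuinely different route from the paper's. The paper reduces by induction to the two-piece case: assuming both $(\restr{0}{t})_\#\pi$ and $(\restr{t}{1})_\#\pi$ are induced by maps $T^1,T^2$, it disintegrates $\pi$ with respect to $e_0$ and shows that $\mu_0$-a.e.\ conditional $\pi_x$ is the Dirac mass at the concatenation $\spt\,\pi^1_x*\spt\,\pi^2_{\hat T(x)}$; the only nontrivial point, that $\pi_x$ gives full mass to $(\restr{t}{1})^{-1}\spt\,\pi^2_{\hat T(x)}$, is settled by a mass-counting contradiction involving the sets $E$ and $F$. You instead use the equivalent characterisation that a plan is induced by a map if and only if it is concentrated on a Borel set on which $e_0$ is injective, pull back the $k$ witnesses $\Gamma_i$ along the (continuous, hence Borel) restriction maps, intersect, and propagate injectivity across consecutive subintervals. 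This handles all $k$ pieces at once with no induction and avoids disintegration entirely --- in particular the identity $\pi^1_x=(\restr{0}{t})_\#\pi_x$ and the attendant measurability issues --- at the price of invoking Lusin--Souslin in both directions of the characterisation (note the forward direction also needs it, to see that the image of the inducing map is Borel). Two points worth making explicit: first, ``induced by a map'' must be read as $\pi^i=T_\#\mu_{i/k}$ with $e_0\circ T=\id$ $\mu_{i/k}$-a.e.\ (equivalently, the disintegration over $e_0$ is a.e.\ a Dirac mass); without that normalisation the statement is essentially vacuous and your characterisation fails, but this is the convention the paper itself uses implicitly. Second, your closing observation that the lemma is purely measure-theoretic and independent of the curvature hypothesis is accurate and consistent with the paper, which likewise uses no convexity in this proof.
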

We will postpone the proof of Lemma \ref{Localization} to the end of the paper. Using the above lemma we may restrict the plan $\pi$ so that we have the inequality 
\begin{align}\label{local}L^2\le \frac{\log2}{6\abs{K}+1},
\end{align}
where \[L\coloneqq  \esssup_{\gamma\in\geo{X}} l(\gamma)\]
and the essential supremum is taken with respect to the (restricted) measure $\pi$.

{\bf Step 1:} As in the proof of essential non-branching of strong $CD(K,\infty)$ -spaces, we lift the measure $\pi$ to a measure in $\P(\geo{X}^2)$. Let $\{\pi_x\}$ be a disintegration of the measure $\pi$ with respect to the evaluation map $e_0\colon \geo{X }\to X$. We define $\sigma\in \P(\geo{X}^2)$ by defining the integral of any Borel function $f\colon \geo{X}^2\to [0,\infty]$ with respect to $\sigma$ as
\begin{align}
\int_{\geo{X}^2}f\d\sigma\coloneqq\int_{X}\int_{e_0^{-1}(x)\times e_0^{-1}(x)}f\d (\pi_x\times \pi_x)\,\d\mu_0.
\end{align}
Note that $\sigma$ is well defined since the map $x\mapsto \pi_x\times \pi_x(A)$ is Souslin measurable for every Borel set $A\subset \geo{X}^2$. This can be seen by applying Dynkin's $\pi-\lambda$ theorem to a $\pi$-system $\mathcal{B}(\geo{X})\times \mathcal{B}(\geo{X})$ and a $\lambda$-system $\{A\in \mathcal{B}(\geo{X}^2): x\mapsto \pi_x\times \pi_x(A)\textrm{ is Souslin measurable}\}$.

Since $\pi$ is not induced by a map, there exists $H\subset X$ with positive $\mu_0$-measure so that for any $x\in H$ the measure $\pi_x$ is not a dirac mass. Therefore, there exists $F\subset \geo{X}^2$ for which $\sigma(F)>0$, and it holds that for any $(\gamma^1,\gamma^2)\in F$ we have that  $\gamma^1_0=\gamma^2_0$ and $\gamma^1\neq\gamma^2$. By Lemma $\ref{simplelemma}$ there exists $E\subset \geo{X}$ so that  $\sigma((E\times(X\setminus E))\cap F)>\sigma(F)/5$. Let $\eta>0$ be such that
\begin{align}
\sigma(A)>\frac{1}{10}\sigma(F)
\end{align} 
for the set $A\coloneqq \{(\gamma^1,\gamma^2)\in(E\times(X\setminus E))\cap F: d(\gamma^1,\gamma^2)>\eta \}$. Let $m\in\N$ be large enough so that $\frac{1}{m}<\frac{\eta}{4L}$ and divide the interval $[0,1]$ into $m$ intervals $\{I_j\}_{j=1}^m$ of equal length. Then for every $(\gamma^1,\gamma^2)\in A$ there exist $j\in \{1,\dots, m\}$ and $t\in I_j$ so that $\d(\gamma^1_t,\gamma^2_t)>\eta$ and thus for any $s\in I_j$, since $\abs{t-s}\le\frac1m$, we have that
\begin{align}
\d(\gamma^1_s,\gamma^2_s)\ge \d(\gamma^1_t,\gamma^2_t)-\d(\gamma^1_t,\gamma^1_s)-\d(\gamma^2_t,\gamma^2_s)\ge\eta-2\frac{\eta}{4L}L\ge \frac{\eta}{2}.
\end{align}
Hence there exists $i$ so that $\sigma(A_i)>0$ for $A_i\coloneqq \{(\gamma^1,\gamma^2)\in A : \d(\gamma^1_t,\gamma^2_t)>\frac{\eta}{2}\ \forall t\in I_i\}$. Take a countable partition $P\coloneqq\{Q_j\}$ of $X$ with $ \d(Q_j)<\frac{\eta}{4}$. Then for some $Q\in P  $ we have that $\sigma((e_{S}^{-1}(Q)\times \geo{X})\cap A_i)>0$, where $S\in I_i$ is the mid-point of the interval $I_i$. Now for any $(\gamma^1,\gamma^2)\in (e_{S}^{-1}(Q)\times \geo{X})\cap A_i $ it holds that $\gamma^1_{S}\in Q$ and $\gamma^2_{S}\notin Q$. Define $\tilde\sigma$ as the restriction $\tilde\sigma\coloneqq \sigma\lvert_{(e_{S}^{-1}(Q)\times \geo{X})\cap A_i}$. Furthermore define $\pi^1\coloneqq \pr^1_\#\sigma$ and $\pi^2\coloneqq \pr^2_\#\sigma$. Then $\pi^1,\pi^2\ll \pi$, $\mu^1_0\coloneqq (e_0)_\# \pi^1=(e_0)_\#\pi^2\eqqcolon \mu_0^2$ and $\mu^1_S\bot \mu^2_S$. By restricting the measures $\pi^1$ and $\pi^2$ we may assume that $\rho^i_j<C<\infty$ for some $C>0$ and for all $i\in\{1, 2\}$ and $j\in\{0,1\}$, where $\rho^i_j$ is the density of $(e_j)_\#\mu^i$ with respect to $\m$. By inner regularity of $\mu^1_0$ we may assume that $\mathrm{spt}\,\mu^1_0$ is compact.

{\bf Step 2:} As in the proof of \cite[Theorem~3.3]{Gigli} we will find a time $T\in(0,S)$ so that the intersection of the sets $\{\rho^1_T>0\}$ and $\{\rho^2_T>0\}$ has positive measure. We repeat the argument here. Since $\m$ is locally finite and $\spt\,\mu^1_0$ is compact, there exists a neighbourhood of $\spt\,\mu^1_0$ with finite $\m$-measure. Furthermore there exists an $\epsilon$-neighbourhood $D$ of $\spt\, \mu^1_0$ for which $\m(D)\le \frac{3}{2}\m(\spt\,\mu^1_0)$. By Lemma \ref{Gigli} and by the fact that $L<\infty$ there exists $T\in(0,S)$ so that $\m(\{\rho^i_T>0\})> \frac{3}{4}\m(\{\rho^i_0>0\})$ and $\{\rho^i_T>0\}\subset D $. Hence $\m(\{\rho^1_T>0\}\cap\{\rho^2_T>0\})>0$. Define now \begin{align}\hat\pi^i\coloneqq \frac{1}{\int\min\{\rho^1_T,\rho^2_T\}\d\m}\left(\frac{\min\{\rho_T^1,\rho_T^2\}}{\rho_T^i}\circ e_T\right)\pi^i
\end{align}
for $i\in\{1,2\}$. Then we have that $\hat\pi^1,\hat\pi^2\ll \pi$, $\hat\mu_T^1=\hat\mu_T^2$, $\hat\mu_S^1\bot\hat\mu_S^2$ and $\hat\rho^1_j, \hat\rho^2_j<C<\infty$ for some $C>0$ and for $j\in\{0,1\}$. By Lemma \ref{simplerlemma} there exists $C>0$ so that $\hat\rho^1_t,\rho^2_t<C<\infty$ for every $t\in[0,1]$. 

{\bf Step 3:} Let $T=t_0<t_1<\cdots t_k=S$ be a partition of the interval $[T,S]$ into subintervals of equal length and define $f(i)\coloneqq \int \min\{\hat\rho^1_{t_i},\hat\rho^2_{t_i}\}\d\m$. Then $f(0)=1$ and $f(k)=0$. Therefore there exists $i\in\{0,\dots, k\}$ so that $f(i)-f(i+1)\ge \frac{1}{k}$. Define now $\tilde\pi^1\coloneqq \hat\pi^1\lvert_{e_{t_{i+1}}^{-1}(\{\hat\rho^1_{t_{i+1}}>\hat\rho^2_{t_{i+1}}\})}$ and $\tilde\pi^2\coloneqq \hat\pi^2\lvert_{e_{t_i+1}^{-1}(\{\hat\rho^1_{t_{i+1}}\le\hat\rho^2_{t_{i+1}}\})}$, and further for $j\in\{1,2\}$
\begin{align} \bar\pi^j\coloneqq \left(\frac{\min\{\tilde\rho^1_{t_i}, \tilde\rho^2_{t_i}\}}{\tilde\rho^j_{t_i}}\circ e_{t_i}\right)\tilde\pi^j.
\end{align}
Then $\bar\mu^1_{t_{i+1}}(\{\hat\rho^1_{t_{i+1}}>\hat\rho^2_{t_{i+1}}\})=\bar\mu^1_{t_{i+1}}(X)$ and $\bar\mu^2_{t_{i+1}}(\{\hat\rho^1_{t_{i+1}}\le\hat\rho^2_{t_{i+1}}\})=\bar\mu^2_{t_{i+1}}(X)$. Thus we have that $\bar\mu^1_{t_{i+1}}\bot \bar\mu^2_{t_{i+1}}$. By definition we also have that $\bar\mu^1_{t_i}=\bar\mu^2_{t_{i}}$. Let us prove that \todo{Here $\hat\mu(X)=1$}
\begin{align}\label{big} \bar\mu^1_{t_{i}}(X)\ge \frac{1}{k}.
\end{align}
By definition we have that $\bar\rho_{t_i}^j\le\tilde\rho_{t_i}^j\le \hat\rho_{t_i}^j$ for $j\in\{1,2\}$. Also we have that 
\begin{align}
\tilde\mu_{t_i}^1(X)+\tilde\mu_{t_i}^2(X)&=\tilde\mu_{t_{i+1}}^1(X)+\tilde\mu_{t_{i+1}}^2(X)
\\&=\int\hat\rho_{t_{i+1}}^1+\hat\rho_{t_{i+1}}^2-\min\{\hat\rho_{t_{i+1}}^1,\hat\rho_{t_{i+1}}^2\}\d\m
\\&= 2-f(i+1).
\end{align}
Therefore we get that
\begin{align}\bar\mu_{t_i}^1(X)&=\int \min\{\tilde\rho_{t_i}^1,\tilde\rho_{t_i}^2\}\d\m=\tilde\mu_{t_i}^1(X)+\tilde\mu_{t_i}^2(X)-\int \max\{\tilde\rho_{t_i}^1,\tilde\rho_{t_i}^2\}\d\m
\\ &\ge2-f(i+1)-\int\max\{\hat\rho_{t_i}^1,\hat\rho_{t_i}^2\}=f(i)-f(i+1)\ge \frac{1}{k}.
\end{align}

{\bf{Final step:}} Now we are ready to arrive to a contradiction with the convexity of the entropy. As in \cite{Rajala2014} we consider three measures $\bar\pi^1/M$, $\bar\pi^2/M$ and $(\bar\pi^1+\bar\pi^2)/(2M)$ along which the entropy is $K$-convex. Here $M\coloneqq \bar\pi^i(\geo{X})$. For these measures we have that $\bar\rho^i_t<C$ for all $t\in [0,1]$ and $i\in \{1,2\}$, $M>1/k$, $\bar\mu^1_{t_i}=\bar\mu^2_{t_i}$ and $\bar\mu^1_{t_{i+1}}\bot \bar\mu^2_{t_{i+1}}$. From these facts we get that
\begin{align}
&\phantom{=}\int \frac{\bar\rho^1_{t_i}}{M}\log \frac{\bar\rho^1_{t_i}}{M}\d\m \\
&\le\frac{1}{kt_{i+1}}\int \frac{\bar\rho^1_0+\bar\rho^2_0}{2M}\log\frac{\bar\rho^1_0+\bar\rho^2_0}{2M}\d\m+\frac{t_i}{t_{i+1}}\int \frac{\bar\rho^1_{t_{i+1}}+\bar\rho^2_{t_{i+1}}}{2M}\log\frac{\bar\rho^1_{t_{i+1}}+\bar\rho^2_{t_{i+1}}}{2M}\d\m\\
&\phantom{=}+\frac{\abs{K}}{2}\frac{t_{i}}{t_{i+1}}\mathrm{W_2^2(\bar\mu_0,\bar\mu_1)} \\
&\le \frac{\log\frac{C}{M}}{kt_{i+1}}-\frac{t_i}{t_{i+1}}\log2+\frac{t_i}{2t_{i+1}}\left(\int \frac{\bar\rho^1_{t_{i+1}}}{M}\log \frac{\bar\rho^1_{t_{i+1}}}{M}\d\m+\int \frac{\bar\rho^2_{t_{i+1}}}{M}\log \frac{\bar\rho^2_{t_{i+1}}}{M}\d\m\right)\\ 
&\phantom{=}+\frac{\abs{K}}{2}\frac{t_{i}}{t_{i+1}}L^2 \\
&\le \frac{\log\frac{C}{M}}{kt_{i+1}}-\frac{t_i}{t_{i+1}}\log2+\frac{t_i}{2t_{i+1}}\left[\frac{2(1-t_{i+1})}{1-t_i}\int \frac{\bar\rho^1_{t_i}}{M}\log \frac{\bar\rho^1_{t_i}}{M}\d\m\right. \\
&\phantom{=}+\left.\frac{1}{k(1-t_i)}\left(\int \frac{\bar\rho^1_{1}}{M}\log \frac{\bar\rho^1_{1}}{M}\d\m+\int \frac{\bar\rho^2_{1}}{M}\log \frac{\bar\rho^2_{1}}{M}\d\m\right)\right] +\abs{K}\frac{t_{i}}{t_{i+1}}L^2\\
&\le \left(\frac{1}{kt_{i+1}}+\frac{t_i}{t_{i+1}k(1-t_{i+1})}\right)\log\frac{C}{M}-\frac{5}{6}\frac{t_i}{t_{i+1}}\log2+\frac{t_i(1-t_{i+1})}{t_{i+1}(1-t_i)}\int \frac{\bar\rho^1_{t_i}}{M}\log \frac{\bar\rho^1_{t_i}}{M}\d\m
\end{align}
from which we obtain
\begin{align}
\frac{1}{kt_{i+1}(1-t_i)}\int \frac{\bar\rho^1_{t_i}}{M}\log \frac{\bar\rho^1_{t_i}}{M}\d\m\le \frac{1}{kt_{i+1}(1-t_i)}\log\frac{C}{M}-\frac{5}{6}\frac{t_i}{t_{i+1}}\log2
\end{align}
and further
\begin{align}
\int \frac{\bar\rho^1_{t_i}}{M}\log \frac{\bar\rho^1_{t_i}}{M}\d\m\le\log\frac{C}{M}-\frac{5k}{6}(1-t_i)t_i\log2.
\end{align}
Choosing $k$ large enough so that $T,(1-S)^2\ge \frac{1}{k}$, and using the above inequality together with the convexity of the entropy along $\bar\pi^1$ we get that
\begin{align}
&\phantom{=}\int \frac{\bar\rho^1_{t_{i+1}}}{M}\log \frac{\bar\rho^1_{t_{i+1}}}{M}\d\m\le \frac{t_{i+1}-t_i}{1-t_i}\log \frac{C}{M}+\frac{1-t_{i+1}}{1-t_i}\int\frac{\rho^1_{t_i}}{M}\log\frac{\rho^1_{t_i}}{M}\\ 
&\phantom{=}+\frac{\abs{K}}{2}\frac{(1-t_{i+1})}{k(1-t_i)^2}L^2\\
&\le \left(\frac{t_{i+1}-t_i}{1-t_i}+\frac{1-t_{i+1}}{1-t_{i}}\right)\log\frac{C}{M}-\frac{5k}{6}t_i(1-t_{i+1})\log2\\
&\phantom{=}+\frac{\abs{K}}{2}kt_i(1-t_{i+1})L^2 \\
&\le \log\frac{C}{M}-\frac{k}{3}t_i(1-t_{i+1})\log2
\end{align}
from which by \eqref{big} and Jensen's inequality we obtain \todo{What if $\m(X)=\infty$?}
\begin{align}
&\phantom{=}\log\frac{1}{k\m(X)}-\log M\le \int \frac{\bar\rho^1_{t_{i+1}}}{M}\log\bar\rho^1_{t_{i+1}}\d\m-\log M=\int \frac{\bar\rho^1_{t_{i+1}}}{M}\log\frac{\bar\rho^1_{t_{i+1}}}{M}\d\m\\
&\le \log\frac{C}{M}-\frac{k}{3}t_i(1-t_{i+1})\log2.
\end{align}
Hence we arrive to a contradiction
\begin{align}
\frac{3}{k}(\log\frac{1}{k\m(X)}-\log C)\le -t_i(1-t_{i+1})\log2\le-T(1-S)\log2
\end{align}
since the left hand side goes to zero when $k$ goes to infinity while the right hand side is negative and bounded away from zero.
\end{proof}

\begin{proof}[Proof of Lemma \ref{Localization}]
By an induction argument it suffices to prove that if $\pi\in \og(\mu_0,\mu_1)$ is not induced by a map, then  for each $t\in(0,1)$ we have that $(\restr{0}{t})_\#\pi$ or $(\restr t1)_\#\pi$ is not induced by a map. 

Suppose that there is $t\in(0,1)$ so that both $\pi^1\coloneqq (\restr0t)_\#\pi$ and $\pi^2\coloneqq (\restr t1)_\#\pi$ are induced by maps $T^1$ and $T^2$ respectively. Denote $\{\pi_x\}$, $\{\pi^1_x\}$, $\{\pi^2_x\}$ the disintegrations of $\pi$, $\pi^1$ and $\pi^2$ with respect to $e_0$. It suffices to prove that $\spt\,\pi_x$ is a singleton for $\mu_0$-a.e. $x\in X$. We will do this by proving that actually 
\begin{align}\spt\,\pi_x=\spt\,\pi^1_x* \spt\,\pi^2_{\hat T(x)},
\end{align}
where $\hat T\coloneqq e_1\circ T^1$ is the optimal map from $\mu_0$ to $\mu_{t}$, and $*$ is the concatenation of paths. 

We begin by observing that for $\mu_0$-a.e. $x$ we have, by the definition of disintegration,\todo{or uniqueness?}\ that $\pi^1_x=(\restr0t)_\#\pi_x$. In particular we have that $\spt\,\pi^1_x=\restr{0}{t}\,\spt\,\pi_x$. Thus, since both $\spt\, \pi^1_x$ and $\spt\,\pi^2_{\hat T(x)}$ are singletons for $\mu_0$-a.e. $x\in X$, it suffices to prove that $\pi_x((\restr t1)^{-1}\spt\, \pi^2_{\hat T(x)})=1$ for $\mu_0$-a.e. $x\in X$. Suppose this is not the case. Then the set 
\begin{align}E\coloneqq \{x:\pi_x((\restr t1)^{-1}\spt\, \pi^2_{\hat T(x)})<1\}\end{align} has positive $\mu_0$-measure. Consider the set $F\coloneqq \cup_{x\in E}\spt\, \pi^2_{\hat T(x)}$. Then
\begin{align}
\pi^2(F)&=\pi\left((\restr t1)^{-1}F\right)=\int \pi_x\left(\bigcup_{y\in E} (\restr t1)^{-1}\spt\, \pi^2_{\hat T(y)}\right)\,\d\mu_0(x)\\
&=\int_E \pi_x\left(\bigcup_{y\in E} (\restr t1)^{-1}\spt\, \pi^2_{\hat T(y)}\right)\,\d\mu_0(x)+\int_{X\setminus E} \pi_x\left(\bigcup_{y\in E} (\restr t1)^{-1}\spt\, \pi^2_{\hat T(y)}\right)\,\d\mu_0(x) \\
&=\int_E \pi_x\left((\restr t1)^{-1}\spt\, \pi^2_{\hat T(x)}\right)\,\d\mu_0(x)+\int_{X\setminus E} \pi_x\left(\bigcup_{y\in E} (\restr t1)^{-1}\spt\, \pi^2_{\hat T(y)}\right)\,\d\mu_0(x) \\
&< \mu_0(E)+\int_{X\setminus E}\pi_x\left(\bigcup_{y\in E}(\restr t1)^{-1}\spt\,\pi^2_{\hat T(y)}\right)\,\d\mu_0(x) \\
&=\mu_0(E)+\mu_0(\{x\in X\setminus E: \exists y\in E\textrm{ for which }\hat T(x)=\hat T(y) \})\\
&=\int_E \pi^2_{\hat T(x)}\left(\bigcup_{y\in E}\spt\, \pi^2_{\hat T(y)}\right) \,\d\mu_0(x)+ \int_{X\setminus E} \pi^2_{\hat T(x)}\left(\bigcup_{y\in E}\spt\, \pi^2_{\hat T(y)}\right) \,\d\mu_0(x) \\
&=\pi^2(F)
\end{align}
which is a contradiction. Thus we have proven the lemma.
\end{proof}

\bibliographystyle{amsplain}
\bibliography{Lahteetteethal}
\end{document}